\documentclass[a4paper,12pt]{article}
\usepackage{amsmath,amssymb,amsthm,amsfonts}
\usepackage[mathscr]{euscript}
\usepackage{cite}
\newtheorem{thm}{Theorem}[section]

\newtheorem{defn}{Definition}[section]

\newtheorem{ex}{Example}[section]

\begin{document}
\lineskip 1.5em
\begin{center}
\vspace{1cm}
 {\bf \large On Polar Decomposition of Dual Matrices }
\vspace{0.8cm}

                             {\bf  V.Bhagyalakshmi}\\
                        Department of Mathematics\\
                        National  Institute of Technology Andhra Pradesh\\
                                Andhra Pradesh - 534101, India. \\
                                and\\

                          {\bf T. Kurmayya }\\
                        Department of Mathematics\\
                        National Institute of Technology Andhra Pradesh\\
                                Andhra Pradesh - 534101, India. \\

\end{center}
\begin{abstract}

A number of the form $a_s+a_d \epsilon$, where $a_s, a_d \in\mathbb{C}^{n \times n}$ and $\epsilon$ is an infinitesimal unit satisfying $\epsilon^2=0$, is called a dual number. A matrix with dual number entries is known as dual matrix. The objective of this article is to derive  a polar decomposition of dual matrices and to study the  existence and uniqueness conditions. To illustrate these results, some examples are constructed.

\end{abstract}

\vspace{1cm} {\bf Keywords:}\,\,Dual complex numbers; Dual matrix ; 
Unitary decomposition; Polar decomposition.\,\,\,

\vspace{.2cm} {\bf AMS Subject Classification:}\,\,\,\,  15A23 , 15A18
 
.\\
\thispagestyle{empty}

\newpage
\section{Introduction} 

Dual numbers were first introduced by \cite{wk1873preliminary} as a part of his extensive research on clifford algebras. They take the form $a=a_s+a_d \epsilon$ where $a_s$ and $a_d$ are real or complex numbers and $\epsilon$ is the infinitesimal unit satisfying $\epsilon^2=0$. Accordingly, we call $A=A_s+A_d \epsilon$ with $A_s$ and $A_s\in\mathbb{ C}^{m \times n}$ a dual complex matrix. The set of this kind of matrices is denoted as $D \mathbb {C}^{m \times n}$. Dual numbers and dual matrices have found applications in various fields such as robotics \cite{pradeep1989use,gu1987dual} , classical mechanics {\cite{wk1873preliminary ,pennestri2009linear}} and computer vision\cite{penunuri2019dual}. Moreover they are also linked with screw theory\cite{fischer2017dual}. For decompositions of dual matrices, some authors discussed singular value decomposition\cite{qi2022low,wei2024singular}, QR decomposition \cite{xu2024qr,pennestri2007linear}, UTV decomposition \cite{xu2024utv} and rank decomposition\cite{wang2023dual}. In this paper, we focus on Polar decomposition and Symmetric unitary polar decomposition of dual matrices.

Motivated by\cite{fassbender2008note} we introduce the factorization of a square(rectangular) dual complex matrix $A$ of the form
$$
\begin{aligned}
& A=P U \ldots \ldots \text { (1) and } \\
& A=S U \ldots \ldots \text { (2) } 
\end{aligned}
$$
where $P$ is positive semi definite, $S$ is complex symmetric and $U$ is unitary. We call these factorizations as Dual Polar Decomposition and Dual Symmetric Unitary Polar Decomposition (DSUPD) respectively. It is shown that the DSUPD exists for every matrix $A$ and it always non unique. Even the dual symmetric factor $S$ on be chosen in infinitely many ways. where as dual polar decomposition exists for every matrix $A$ and it is unique if $A$ is nonsingular. i.e the matrix $P$ is uniquely determined as the dual Hermitian positive semidefinite square root of $A A^*$ and $U$ is uniquely determined by (1) for a nonsingular $A$.

If $A$ is singular, then $A$ admits infinitely many representations of form (1). The dual polar decomposition and DSUPD can, in principle, be extended to rectangular matrices. In this article, we present the formulation of polar decomposition for both square and rectangular matrices. However, for the development and analysis of DSUPD, we restrict our attention to the case of square matrices.
More precisely, factorization (1) should be termed as left dual polar decomposition because $A$ can be factored as

$$
\begin{aligned}
A=W Q  \ldots \ldots \text { (3) }
\end{aligned}
$$
Where $W $ is dual unitary and $Q$ is dual Hermitian positive semidefinite. Again $Q$ is uniquely determined as the square root of $A^* A$. We call (3) a right dual polar decomposition.
Similarly we introduce right DSUPD and left DSUPD
i.e $ A=S U$ and $A=V R$
where $U, V$ are dual unitary while $S$,$R$ are dual complex symmetric and there is an intimate relationship between dual polar decomposition and dual symmetric unitary polar decomposition.

In recent years, there has been growing interest in extending classical decomposition results to more general frame works. Particularly those involves dual structures. The primary objective of this work is to investigate the existence and uniqueness properties of such decomposition. In this connection, the dual polar decomposition and DSUPD is obtained by extending the classical polar decomposition and SUPD frame work to incorporate dual matrices. Dual polar decomposition and DSUPD coincide when the field is real.\\
The paper is organized as follows . In section 2, we introduced basic notations, definitions and results . In section 3 and 4, we prove series of lemmas and the main theorems.

\section{Notations,  Definitions and Preliminaries}
In this section, we first briefly explain some of the terminologies used in this article. The set of real numbers, complex numbers, dual numbers, and dual complex numbers are denoted by $\mathbb{R}, \mathbb{C}, \mathbb{D}, \mathbb{D C}$. A dual number is called appreciable if its standard part is nonzero; otherwise, it is called infinitesimal. Suppose we have two dual complex numbers $a=a_s+a_d \epsilon$ and $b=b_s+b_d \epsilon$. Then their sum is $a+b=\left(a_s+b_s\right)+\left(a_d+b_d\right) \epsilon$, and their product is $a b=a_s b_s+ \left(a_s b_d+a_d b_s\right) \epsilon$. The multiplication of dual complex numbers is commutative. We recall the total order ' $\leq \prime$ 'over $\mathbb{D}$ defined by Qi et al. \cite{qi2022dual}. Suppose that ${a}=a_s+a_d \epsilon, {b}=b_s+b_d \epsilon \in \mathbb{D}$. Then,\\
(i) ${a}={b}$ if $a_s=b_s$ and $a_d=b_d$;\\
(ii) ${a}<{b}$ if $a_s<b_s$, or $a_s=b_s$ and $a_d<b_d$;\\
(iii) ${a} \leq {b}$ if $a_s<b_s$, or $a_s=b_s$ and $a_d \leq b_d$.\\
\noindent
In particular, a dual number ${a}=a_s+a_d \epsilon \in \mathbb{D}$ is positive, nonnegative, negative and nonpositive if ${a}>0, {a} \geq 0, {a}<0$, and ${a} \leq 0$, respectively. Denote $\mathbb{D}_{+}, \mathbb{D}_{++}$as the set of nonnegative and positive dual numbers, respectively. Dual number $a=a_s+a_d \epsilon$, its magnitude is defined as a nonnegative dual number
$$
|a|= \begin{cases}\left|a_s\right|+\operatorname{sgn}\left(a_s\right) a_d \epsilon, & \text { if } a_s \neq 0, \\ \left|a_d\right| \epsilon, & \text { otherwise } .\end{cases}
$$
For any dual number $a=a_s+a_d \epsilon$ and dual number $b=b_s+b_d \epsilon$ with $b_s \neq 0$, or $a_s=0$ and $b_s=0$, there is
$$
\frac{a_s+a_d \epsilon}{b_s+b_d \epsilon}= \begin{cases}\frac{a_s}{b_s}+\left(\frac{a_d}{b_s}-\frac{a_s}{b_s} \frac{b_d}{b_s}\right) \epsilon, & \text { if } b_s \neq 0 \\ \frac{a_d}{b_d}+c \epsilon, & \text { if } a_s=0, b_s=0\end{cases}
$$
where $c$ is an arbitrary number.
Later, when a dual number is nonnegative or positive, we say it is a nonnegative dual number or a positive dual number respectively. If we say a number is a nonnegative number or positive number, then that number should be a real number. We use 0, 0 , and $O$ to denote a zero number, a zero vector, and a zero matrix, respectively. In particular, for  a dual complex number ${a}=a_s+a_d \epsilon$, its conjugate is defined as ${a}^*=\bar{a}_s+\bar{a}_d \epsilon$, where $\bar{a}_s$ and $\bar{a}_d$ represent the complex conjugates of $a_s$ and $a_d$, respectively. We denote the set of all complex matrices of order $m \times n$ by $\mathbb{C}^{m \times n}$. For two complex matrices $A_s, A_d \in \mathbb{C}^{m \times n}, {A}=A_s+A_d \epsilon \in \mathbb{D C}^{m \times n}$ represents a dual complex matrix, where $\mathbb{D} \mathbb{C}^{m \times n}$ is the set of $m \times n$ dual complex matrices. The $\operatorname{rank}$ of $A \in \mathbb{C}^{m \times n}$ is denoted by $\operatorname{rank}(A), A^*=\left(\bar{a}_{i j}\right)^T$ and ${A}^*=A_s^*+A_d^* \epsilon$ denote the conjugate transpose of a complex matrix and dual complex matrix, respectively. A dual complex matrix is Hermitian if ${A}={A}^*$, and unitary if ${A}^* {A}={A} {A}^*=I_n$, where $I_n$ is $n \times n$ identity matrix. Apparently, $A \in \mathbb{D} \mathbb{C}^{n \times n}$ is unitary if and only if its column vectors form an orthonormal basis of $\mathbb{D C}^n$. Let $n \leq m $, we say that $A \in \mathbb{D C}^{m \times n}$ is partially unitary if its column vectors are unit vectors and orthogonal to each other.  \\
We begin this section by recalling the Eigenvalues of dual complex matrices.\\

\noindent
It is classical that an $n \times n$ real symmetric or complex Hermitian matrix has $n$ real eigenvalues, and this matrix is positive semidefinite (or definite, respectively) if and only if all of these $n$ eigenvalues are nonnegative (or positive, respectively). In 1997, Zhang\cite{zhang1997quaternions} extended this to quaternion Hermitian matrices. In 2023, Qi and Luo \cite{qi2021eigenvalues} further extended this to dual quaternion Hermitian matrices. This is actually also true for dual symmetric or dual complex Hermitian matrices. We now state these in our general frame.

Let $A \in \mathbb{D}\mathbb{C}^{n \times n}$ be a  Hermitian matrix and $\mathbf{x} \in \mathbb{D}\mathbb{C}^n$, we have $\left(\mathbf{x}^* A \mathbf{x}\right)^*=\mathbf{x}^* A \mathbf{x}$. This implies that  $\mathbf{x}^* A \mathbf{x}$ is a dual number,  by\cite{qi2022dual}, we may distinguish that $\mathbf{x}^* A \mathbf{x}$ is nonnegative, or positive, or not. If for all $\mathbf{x} \in \mathbb{D}\mathbb{C}^n, \mathbf{x}^* A \mathbf{x}$ is nonnegative, then we say that $A$ is positive semidefinite. If for all $\mathbf{x} \in \mathbb{D}\mathbb{C}^n$ and appreciable, $\mathbf{x}^* A \mathbf{x}$ is positive, then we say that $A$ is positive definite.\\
Let $A \in \mathbb{D}\mathbb{C}^{n \times n}, \mathbf{x} \in \mathbb{D}\mathbb{C}^n$ be appreciable, and $\lambda \in \mathbb{D}\mathbb{C}$. If
$$
A \mathrm{x}=\mathrm{x} \lambda,
$$
then $\lambda$ is called a right eigenvalue of $A$, with $\mathbf{x}$ as its corresponding right eigenvector. If
$$
A \mathbf{x}=\lambda \mathbf{x}
$$
where $\mathbf{x}$ is appreciable, i.e., $\mathbf{x}_s \neq \mathbf{0}$, then $\lambda$ is called a left eigenvalue of $A$, with a left eigenvector $\mathbf{x}$.\\
\noindent
We begin this section by recalling the unitary decomposition and singular value decomposition of a dual complex matrix.
 \begin{thm}(Theorem 4.4, \cite{qi2022low})
Suppose that $A=A_{s }+A_{{d}}\epsilon \in \mathbb{D} \mathbb{C}^{n \times n}$ is a Hermitian matrix. Then there are unitary matrix $U \in \mathbb{D} \mathbb{C}^{n \times n}$ and a diagonal matrix $\Sigma \in \mathbb{D}^{n \times n}$ such that $\Sigma=U^* A U$, where
$$
\Sigma \equiv \operatorname{diag}\left(\lambda_1+\lambda_{1,1} \epsilon, \cdots, \lambda_1+\lambda_{1, k_1} \epsilon, \lambda_2+\lambda_{2,1} \epsilon, \cdots, \lambda_r+\lambda_{r, k_r} \epsilon\right)
$$
with the diagonal entries of $\Sigma$ being $n$ eigenvalues of $A$,
$$
A \mathbf{u}_{i, j}=\left(\lambda_i+\lambda_{i, j} \epsilon\right) \mathbf{u}_{i, j},
$$
for $j=1, \cdots, k_i$ and $i=1, \cdots, r, U=\left(\mathbf{u}_{1,1}, \cdots, \mathbf{u}_{1, k_1}, \cdots, \mathbf{u}_{r, k_r}\right), \lambda_1>\lambda_2>\cdots>\lambda_r$ are real numbers, $\lambda_i$ is a $k_i$-multiple eigenvalue of $A_{s t}, \lambda_{i, 1} \geq \lambda_{i, 2} \geq \cdots \geq \lambda_{i, k_i}$ are also real numbers, $\sum_{i=1}^r k_i=n$. Counting possible multiplicities $\lambda_{i, j}$, the form $\Sigma$ is unique.
\end{thm}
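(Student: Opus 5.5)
We need a proof plan for the unitary decomposition of a dual complex Hermitian matrix. The idea is to diagonalize the standard part first and then correct by a dual perturbation whose infinitesimal term solves a Sylvester-type equation.

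\textbf{Step 1: diagonalize the standard part.} Since $A$ is Hermitian, both $A_s$ and $A_d$ are complex Hermitian. By the classical spectral theorem there is a complex unitary $U_0$ with
$U_0^* A_s U_0 = D_s = \operatorname{diag}(\lambda_1 I_{k_1},\dots,\lambda_r I_{k_r})$,
where $\lambda_1>\cdots>\lambda_r$ and the eigenvalues are grouped by multiplicity. Set $B = U_0^* A_d U_0$, which is Hermitian, and partition it conformally into blocks $B_{ij}$. Each diagonal block $B_{ii}$ is Hermitian, so there are unitaries $V_i\in\mathbb{C}^{k_i\times k_i}$ with
$V_i^* B_{ii} V_i=\operatorname{diag}(\lambda_{i,1},\dots,\lambda_{i,k_i})$, ordered so that $\lambda_{i,1}\ge\cdots\ge\lambda_{i,k_i}$. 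Let $V=\operatorname{diag}(V_1,\dots,V_r)$. Then $V$ commutes with $D_s$, so replacing $U_0$ by $U_0V$ we may assume each $B_{ii}$ is already diagonal.

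\textbf{Step 2: remove the off-diagonal part of $B$.} Seek $U = U_0(I+X\epsilon)$ with $X$ skew-Hermitian. Then $U^*U = I + (X+X^*)\epsilon = I$, so $U$ is dual unitary. Compute
$$U^*AU = D_s + \bigl(B + D_sX - XD_s\bigr)\epsilon.$$
We want the $\epsilon$-part to be block diagonal. For $i\ne j$ this requires $B_{ij} + (\lambda_i-\lambda_j)X_{ij}=0$, so we take $X_{ij} = -B_{ij}/(\lambda_i-\lambda_j)$ and $X_{ii}=0$. Since $B_{ji}=B_{ij}^*$, we get $X_{ji}=-X_{ij}^*$, so $X$ is indeed skew-Hermitian. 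This yields $\Sigma = U^*AU$ of the stated form. Reading off the columns of $AU=U\Sigma$ gives the eigen-relations $A\mathbf{u}_{i,j}=(\lambda_i+\lambda_{i,j}\epsilon)\mathbf{u}_{i,j}$, and the real numbers $\lambda_i$, $\lambda_{i,j}$ are as required.

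\textbf{Step 3: uniqueness of $\Sigma$.} Suppose $W^*AW=\Sigma'$ is another such diagonal form with $W=W_s+W_d\epsilon$. The standard part gives $W_s^*A_sW_s=\Sigma'_s$, so the $\lambda_i$ and their multiplicities $k_i$ are the eigenvalues of $A_s$ and are uniquely determined. Since the ordering is fixed, $\Sigma'_s=D_s$. Then $W_s$ commutes with $D_s$ (because $W_s^* D_s W_s = D_s$), hence $W_s$ is block diagonal with unitary blocks. The $\epsilon$-part reads
$$\Sigma'_d = W_s^*A_dW_s + W_d^*A_sW_s + W_s^*A_sW_d.$$
Write $Y=W_s^*W_d$, which is skew-Hermitian because $W$ is unitary. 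The equation becomes $\Sigma'_d=W_s^*A_dW_s + D_sY - YD_s$. The commutator $D_sY-YD_s$ has zero diagonal blocks, so the $i$-th diagonal block gives
$$\Sigma'_{d,ii}=W_{s,ii}^*\,(U_0^*A_dU_0)_{ii}\,W_{s,ii}$$
(after absorbing $U_0$ into $W_s$). This block is unitarily similar to $B_{ii}$, so its eigenvalues are the $\lambda_{i,j}$ counted with multiplicity; the prescribed ordering then fixes $\Sigma'=\Sigma$.

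The main obstacle is this uniqueness bookkeeping: one must check that $W_s$ is forced to be block diagonal and that the commutator term contributes nothing to the diagonal blocks. The existence part is a routine first-order perturbation argument.
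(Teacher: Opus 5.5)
Your proposal is correct and takes essentially the same route as the source. The paper quotes this result from Qi et al.\ without proof, but its worked example in Section~3 carries out exactly your construction: it diagonalizes the standard part with $S$, forms $M=S^*A_sS+S^*A_dS\epsilon$, and applies the first-order unitary correction $\hat W=(\hat PS)^*=S\hat P^*$, where $\hat P^*=I+X\epsilon$ and $X$ is given by your formula $X_{ij}=-(\cdot)_{ij}/(\lambda_i-\lambda_j)$ applied to the $\epsilon$-part of $M$. One point to tighten in Step~3 is that it is $U_0^*W_s$, not $W_s$ itself, that satisfies $G^*D_sG=D_s$ and is therefore block diagonal; your remark about absorbing $U_0$ should state this explicitly, and with it the uniqueness argument is complete.
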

\begin{thm}(Theorem 5.2 ,\cite{qi2022low})\label {Theorem 2}
Let ${A} \in \mathbb{D C}^{n \times m}$. Then, there exist dual complex unitary matrices ${V} \in \mathbb{D C}^{n \times n}$ and ${W} \in \mathbb{D C}^{m \times m}$ such that
$$
{A}={V} {\Sigma} {W}^*={V}\left[\begin{array}{cc}
{\Sigma}_t & O \\
O & O
\end{array}\right] {W}^*={V}\left[\begin{array}{cc}
{\Sigma}_1 & O \\
O & {\Sigma}_2
\end{array}\right] {W}^*
$$
where ${\Sigma}_1 \in \mathbb{D}^{r \times r}, {\Sigma}_1=\operatorname{diag}\left({\mu}_1, \ldots, {\mu}_r\right)$ and ${\Sigma}_2 \in \mathbb{D}^{n-r \times m-r}, {\Sigma}_2=\operatorname{diag}\left ({\mu}_{r+1}, \ldots, {\mu}_t\right.$, $0, \ldots, 0)$. Further, $r \leq t \leq \min \{n, m\}, {\mu}_1 \geq \ldots \geq{\mu}_r$ are positive appreciable dual numbers and ${\mu}_{r+1} \geq \ldots \geq {\mu}_t$ are positive infinitesimal dual numbers. Counting possible multiplicities of the diagonal entries, the form ${\Sigma}_t$ is unique.
\end{thm}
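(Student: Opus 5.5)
The plan is to build the decomposition perturbatively: start from the complex SVD of the standard part $A_s$ and solve for the infinitesimal parts of the unitary factors and of $\Sigma$. Concretely, we write $V=V_s(I+X\epsilon)$ and $W=W_s(I+Y\epsilon)$ with $V_s,W_s$ complex unitary. Then $V^*V=I$ and $W^*W=I$ hold exactly when $X$ and $Y$ are skew-Hermitian. We also write $\Sigma=\Sigma_s+\Sigma_d\epsilon$. Comparing dual parts in $A=V\Sigma W^*$ and using $Y^*=-Y$, the whole problem reduces to the linear equation
$$
B:=V_s^*A_dW_s = X\Sigma_s-\Sigma_sY+\Sigma_d .
$$
Here $V_s,W_s$ may still be re-chosen within the freedom allowed by the SVD of $A_s$.

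To solve it, group the distinct positive singular values $\sigma_1>\dots>\sigma_p>0$ of $A_s$, with multiplicities $k_i$, and write $\Sigma_s=\operatorname{diag}(\sigma_1I_{k_1},\dots,\sigma_pI_{k_p},O)$. Partition $B$, $X$, $Y$ conformally, with index $0$ for the null block.

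\textbf{Off-diagonal blocks.} For $i\neq j$ the equations for the blocks $(i,j)$ and $(j,i)$ read
$$
B_{ij}=X_{ij}\sigma_j-\sigma_iY_{ij},\qquad B_{ji}^*=-\sigma_iX_{ij}+\sigma_jY_{ij},
$$
using $X_{ji}=-X_{ij}^*$ and $Y_{ji}=-Y_{ij}^*$. This $2\times 2$ system in $(X_{ij},Y_{ij})$ has determinant $\sigma_j^2-\sigma_i^2\neq0$, so it is solvable. The same holds when one of the indices is $0$ (taking $\sigma_0=0$), since the other $\sigma$ is positive. For the rectangular null block, the pieces of $X$ or $Y$ that multiply zero are simply free.

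\textbf{Diagonal blocks with $\sigma_i>0$.} Here we need $B_{ii}=\sigma_i(X_{ii}-Y_{ii})+(\Sigma_d)_{ii}$. Take $X_{ii}-Y_{ii}=\sigma_i^{-1}\cdot\tfrac12(B_{ii}-B_{ii}^*)$, which is skew-Hermitian. The remaining Hermitian part $H_i=\tfrac12(B_{ii}+B_{ii}^*)$ must be made diagonal. Diagonalize it as $H_i=Q_i D_i Q_i^*$ with $Q_i$ complex unitary, ordering the eigenvalues decreasingly. Then replace the $i$-th singular subspace bases in $V_s$ and $W_s$ by $V_sQ_i$ and $W_sQ_i$. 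This is allowed because it does not change $V_s\Sigma_sW_s^*$, and after the replacement $(\Sigma_d)_{ii}=D_i$ is real diagonal. (Alternatively one can invoke Theorem 4.4 on a compressed Hermitian matrix.)

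\textbf{Null block.} Here $\Sigma_s$ vanishes, so $B_{00}=(\Sigma_d)_{00}$ is forced. We take a complex SVD of the rectangular matrix $B_{00}$ and absorb its two unitary factors into the null-space bases of $V_s$ and $W_s$. This produces the diagonal block $\operatorname{diag}(\mu_{r+1,d},\dots,\mu_{t,d},0,\dots)\epsilon$, whose nonzero entries are the positive infinitesimal dual numbers.

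All dual singular values can be made nonnegative. The appreciable ones are positive automatically because $\sigma_i>0$, and the infinitesimal ones are nonnegative by construction. Finally, permuting gives the stated ordering and the block form.

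\textbf{Uniqueness of $\Sigma_t$.} The standard parts are the singular values of $A_s$, which are unique. For a fixed $\sigma_i$, the dual parts are the eigenvalues of $H_i=\tfrac12(V_i^*A_dW_i+W_i^*A_d^*V_i)$, where $V_i,W_i$ are the singular-vector bases of $\sigma_i$. Any other admissible choice has the form $V_iQ,W_iQ$ with $Q$ unitary, which changes $H_i$ only by a unitary similarity. Similarly, the infinitesimal values are the singular values of $V_0^*A_dW_0$ built from orthonormal bases of $\ker A_s^*$ and $\ker A_s$, and these are invariant under a change of bases. One must still check that every dual SVD arises in this way. This follows by reversing the computation: any dual SVD yields $V_s,W_s$ forming an SVD of $A_s$, and the block equations above then force the diagonal data.

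The main obstacle I expect is the bookkeeping of this non-uniqueness of the singular subspaces. The standard factors $V_s,W_s$ cannot be fixed in advance. They must be adjusted block by block after seeing $A_d$, and one has to verify that these adjustments do not spoil the already-solved off-diagonal equations. That is fine, since the off-diagonal blocks are simply recomputed after the rotation. The null block also needs care, because $n\neq m$ makes it rectangular.
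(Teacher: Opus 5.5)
Your argument is correct, and it takes a different route from the one the paper relies on.

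The paper does not prove this theorem itself. It cites Qi et al.\ and sketches their construction, which is also carried out in the example of Section 3. That construction takes the unitary decomposition of the dual Hermitian matrix $A^*A$ from the theorem quoted in Section 2 and lets $\Sigma_1$ consist of dual square roots of its appreciable eigenvalues. It then sets $V_1 = AW_1\Sigma_1^{-1}$, completes $V_1$ to a unitary $V$, and observes that the remaining block of $V^*AW$ is purely infinitesimal, $G\epsilon$. A complex SVD of $G$ finishes the job.

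You instead perturb a complex SVD of $A_s$ to first order and solve $V_s^*A_dW_s = X\Sigma_s-\Sigma_sY+\Sigma_d$ blockwise. The key step is that the off-diagonal block systems have determinant $\sigma_j^2-\sigma_i^2\neq 0$. You rotate the singular subspaces, before solving, so that the diagonal data become real diagonal.

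The paper's route is economical: it reuses the dual spectral theorem as a black box and mirrors the classical derivation of the SVD from $A^*A$. However, $\mu^2=0$ for an infinitesimal $\mu$, so $A^*A$ cannot see the infinitesimal singular values. That is why a separate second stage with $G$ is needed, and uniqueness of $\Sigma_t$ must be assembled from the uniqueness in the spectral theorem plus an invariance argument for $G$.

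Your route is self-contained and produces explicit invariants:
\begin{itemize}
\item The dual parts belonging to $\sigma_i$ are the eigenvalues of the Hermitian part of $V_i^*A_dW_i$. This agrees with the paper's square roots, since the $\sigma_i^2$-cluster eigenvalue perturbations of $A^*A$ are exactly $2\sigma_i$ times those.
\item The infinitesimal singular values are the singular values of $V_0^*A_dW_0$. This compression is the paper's $G$ up to unitary factors; in the example it is indeed $[1\ 0]$.
\item Consequently $t-r=\operatorname{rank}(V_0^*A_dW_0)$, and uniqueness falls out directly.
\end{itemize}
The price is the block bookkeeping, especially in the rectangular null blocks. There the corresponding pieces of $X$ and $Y$ have different sizes and the formal $\sigma_0=0$ system simply decouples, as you noted.
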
 
We can also write
$$
{\Sigma}=\left[\begin{array}{cc}
{\Sigma}_t & O \\
O & O
\end{array}\right]=\left[\begin{array}{cc}
{\Sigma}_1 & O \\
O & {\Sigma}_2
\end{array}\right]=\left[\begin{array}{cc}
\Sigma_{1 s}+\Sigma_{1 d} \epsilon & O \\
O & \Sigma_{2 d} \epsilon
\end{array}\right]
$$
Let ${A}={V}\left[\begin{array}{cc}{\Sigma}_1 & 0 \\ 0 & \Sigma_{2 d} \epsilon\end{array}\right] W^*$ be the SVD of ${A}$. In the proof of Theorem 5.2, [15], ${W}$ is constructed using unitary decomposition (Theorem 4.4 \& 5.1, \cite{qi2022low}) of ${A}^* {A}$ and ${\Sigma}_1= \left[\begin{array}{cccc}
{\mu}_1 & \ldots & & 0 \\
0 & {\mu}_2 & & \vdots \\
\vdots & & \ddots & \\
0 & \ldots & & {\mu}_r
\end{array}\right]$
contain square root $\left(\sqrt{{a}}=\sqrt{a_s}+\frac{a_d}{2 \sqrt{a_s}}\right.$, where $\left.{a}=a_s+a_d \epsilon \geq 0, a_s \neq 0\right )$ of appreciable eigenvalue of ${A}^* {A}$. Now, the first $r$ column of ${V}=\left[{V}_1 {V}_2\right]$ is constructed.
\begin{defn}
In Theorem 2.2, the dual numbers $\mu_1, \cdots, \mu_t$ and possibly $\mu_{t+1}=\cdots=\mu_{\min \{m, n\}}=0$, if $t<\min \{n, m\}$, are called the singular values of $A$, the integer $t$ is called the rank of $A$, and the integer $r$ is called the appreciable rank of $A$. We denote the rank of $A$ by $\operatorname{Rank}(A)$, and the appreciable rank of $A$ by $\operatorname{ARank}(A)$.
\end{defn}
\section{Dual polar decomposition} 
The following theorem is required for the polar decomposition of dual matrices.
\begin{thm}
Let $\mathcal{F}$ be a given nonempty family of dual Hermitian matrices. There exists a unitary $U$ such that $U A U^*$ is diagonal for all $A \in \mathcal{F}$ if and only if $A B=B A$ for all $A, B \in \mathcal{F}$.
\end{thm}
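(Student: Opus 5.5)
The necessity direction is routine, and I would do it first. If $UAU^*=D_A$ and $UBU^*=D_B$ with $U$ dual unitary and $D_A,D_B$ diagonal, then
\[
AB=U^*D_AUU^*D_BU=U^*D_AD_BU=U^*D_BD_AU=BA .
\]
This uses only $UU^*=U^*U=I_n$ and the commutativity of multiplication in $\mathbb{DC}$, so diagonal dual matrices commute.

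For sufficiency I would imitate the classical argument (Horn--Johnson, Theorem 2.5.5) by induction on $n$. Pick a non-scalar $A\in\mathcal F$ and unitarily reduce it with Theorem 4.4 of \cite{qi2022low}: $U^*AU=\operatorname{diag}(\lambda_1+\lambda_{1,1}\epsilon,\dots)$. Every $B$ commuting with $A$ should then be block diagonal with respect to the eigenvalue blocks of $A$. On each block one would apply the induction hypothesis to the compressed family, and finally reassemble a block-diagonal unitary.

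The main obstacle is the block-diagonality step, which in the complex case uses that distinct eigenvalues give invertible differences $\lambda_i-\lambda_j$. In the dual setting two eigenvalues can differ by an infinitesimal, a non-invertible dual number such as $(\lambda_{i,j}-\lambda_{i,l})\epsilon$. Then $AB=BA$ only forces the corresponding off-diagonal entries of $B$ to be annihilated by an infinitesimal, and that constrains only their standard parts. When checking this step I believe the statement as written fails. Take
\[
A=\epsilon\begin{bmatrix}1&0\\0&0\end{bmatrix},\qquad
B=\epsilon\begin{bmatrix}0&1\\1&0\end{bmatrix}.
\]
Both are dual Hermitian, and $AB=BA=O$ because $\epsilon^2=0$. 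However, for any unitary $U=U_s+U_d\epsilon$ we have $UAU^*=\epsilon\,U_sA_dU_s^*$ and $UBU^*=\epsilon\,U_sB_dU_s^*$. Simultaneous diagonalization would therefore require the complex unitary $U_s$ to diagonalize the non-commuting Hermitian matrices $\operatorname{diag}(1,0)$ and $\begin{bmatrix}0&1\\1&0\end{bmatrix}$ at once, which is impossible.

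So the plan is to prove necessity as above and to repair sufficiency with a strengthened hypothesis. The natural candidate is that the family $\{A_s\}$ commutes and, on each common eigenspace of the standard parts, the compressed infinitesimal parts commute. Invertibility is the hypothesis that matters for the polar decomposition application ($A A^*$ with $A$ nonsingular), so one should check whether it suffices there. Under such a hypothesis the induction can be run in two layers. First simultaneously diagonalize the standard parts, which is classical. Then on each common eigenspace simultaneously diagonalize the (commuting) compressions of the dual parts, correcting the remaining off-diagonal dual parts with a term $U_d\epsilon$ obtained by solving Sylvester-type equations $\lambda_iX-X\lambda_j=C$ for $\lambda_i\neq\lambda_j$. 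The key step I would verify most carefully is the consistency of these Sylvester corrections across all members of $\mathcal F$ simultaneously; that consistency is where the commutation $AB=BA$ at order $\epsilon$ must be used.
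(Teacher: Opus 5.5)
Your proposal is correct. The paper gives no proof of Theorem 3.1 to compare it with: the theorem is simply asserted, evidently carried over from the complex case, and your analysis shows its sufficiency direction is false.

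\textbf{The counterexample.} Your necessity argument is complete. Your pair $A=\epsilon\operatorname{diag}(1,0)$, $B=\epsilon\begin{bmatrix}0&1\\1&0\end{bmatrix}$ is a valid counterexample to sufficiency. Here $AB=BA=O$, yet simultaneous diagonalization would force the complex unitary $U_s$ to diagonalize the non-commuting Hermitian matrices $A_d$ and $B_d$. Adding $I$ to both matrices gives a positive definite counterexample, so invertibility of the members of $\mathcal F$ does not rescue the statement.

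\textbf{The paper's application.} Theorem 3.1 is used only in the uniqueness step of Theorem 3.2, and there nonsingularity of $A$ does supply your extra condition. Indeed $B_s=C_s=A_s^{1/k}$. On each eigenspace $E$ of $A_s$, where $B_s$ acts as $\mu I$ with $\mu>0$, comparing $\epsilon$-parts of $B^k=C^k=A$ gives $P_EB_dP_E=P_EC_dP_E=(k\mu^{k-1})^{-1}P_EA_dP_E$, with $P_E$ the orthogonal projector onto $E$. For singular $A$ the defect is already in Theorem 3.2(a): every $B=\epsilon B_d$ with $B_d$ Hermitian positive semidefinite is a positive semidefinite square root of $O$, so uniqueness fails.

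\textbf{The repair.} Your repaired hypothesis is the right one, and the consistency step you flagged does close. Every dual unitary can be written $U^*=W_s(I+K\epsilon)$ with $W_s$ complex unitary and $K$ skew-Hermitian. Take $W_s$ diagonalizing the commuting $A_s$, say $W_s^*A_sW_s=\operatorname{diag}(d^A_1,\dots,d^A_n)$, and set $\tilde A_d=W_s^*A_dW_s$. Then $UAU^*$ is diagonal iff $(\tilde A_d)_{ij}=K_{ij}(d^A_j-d^A_i)$ for all $i\neq j$.
\begin{itemize}
\item When $i,j$ lie in the same common eigenspace of the standard parts, this forces the compressions of the $A_d$ to be simultaneously diagonal in the $W_s$-basis. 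So your compression condition is necessary as well as sufficient.
\item When $i,j$ lie in different common eigenspaces, define $K_{ij}$ from any $A_0\in\mathcal F$ with $d^{A_0}_i\neq d^{A_0}_j$. The requirement for every other $B$ is $(\tilde A_{0,d})_{ij}(d^B_j-d^B_i)=(\tilde B_d)_{ij}(d^{A_0}_j-d^{A_0}_i)$. This is precisely the $(i,j)$ entry of the $\epsilon$-part of $A_0B=BA_0$ in the $W_s$-basis.
\item The $K$ so obtained is automatically skew-Hermitian, because each $\tilde A_d$ is Hermitian and the $d^A_i$ are real. Hence $U$ is indeed unitary.
\end{itemize}
Do state the hypothesis explicitly as ``$AB=BA$ for all pairs, plus commuting compressions.'' Commuting standard parts together with commuting compressions is not enough, as $\operatorname{diag}(1,0)$ and $\epsilon\begin{bmatrix}0&1\\1&0\end{bmatrix}$ show.
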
   
 \begin{thm}\label{Theorem 7.2.6}   
Let $A \in \mathbb{D}\mathbb{C}^{n \times n}$ be Hermitian and positive semidefinite, let $r=\operatorname{rank} A$, and let $k \in\{2,3, \ldots\}$.\\
(a) There is a unique  dual Hermitian positive semidefinite matrix $B$ such that $B^k=A$.\\
(b) There is a  dual polynomial $p$ with real coefficients such that $B=p(A)$. Consequently, $B$ commutes with any matrix that commutes with $A$.    
\end{thm}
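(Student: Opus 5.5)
The plan is to reduce to the diagonal case with the dual unitary decomposition (Theorem 4.4 of \cite{qi2022low}, quoted above), take scalar dual $k$-th roots of the eigenvalues, and get uniqueness and part (b) from an interpolation polynomial together with the commuting-family theorem. Before starting I would check the scalar case, and it shows the statement cannot hold as written for every positive semidefinite $A$. A nonnegative infinitesimal $a_d\epsilon$ with $a_d>0$ has no dual $k$-th root: if $b=b_s+b_d\epsilon$, then $b^k=b_s^k+kb_s^{k-1}b_d\epsilon$, so $b^k=a_d\epsilon$ forces $b_s=0$ and hence $b^k=0$. In the other direction, $B=\epsilon I$ is Hermitian and positive semidefinite, since $x^*Bx=\|x_s\|^2\epsilon\ge 0$, and $B^k=O$. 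So $A=O$ already has infinitely many roots. I would therefore prove the theorem under the hypothesis that $A_s$ is nonsingular, i.e.\ $A$ is positive definite with appreciable eigenvalues. I would also remark that existence persists more generally when every infinitesimal eigenvalue is exactly $0$, but uniqueness then fails.

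\textbf{Existence.} Write $A=U\Sigma U^*$ with $\Sigma=\operatorname{diag}(\lambda_i+\lambda_{i,j}\epsilon)$, using the dual unitary decomposition. Each $\lambda_i$ is an eigenvalue of $A_s$, which is positive semidefinite since $x_s^*A_sx_s\ge 0$; under the hypothesis, $\lambda_i>0$. Define the scalar root
\[
\sqrt[k]{\lambda_i+\lambda_{i,j}\epsilon}=\lambda_i^{1/k}+\tfrac{\lambda_{i,j}}{k}\lambda_i^{(1-k)/k}\epsilon,
\]
which is a positive dual number. Set $B=U\Sigma^{1/k}U^*$. Then $B$ is Hermitian, $B^k=A$, and $x^*Bx=y^*\Sigma^{1/k}y\ge 0$ with $y=U^*x$, because a sum of nonnegative dual numbers times the nonnegative numbers $|y_i|^2$ is nonnegative in the total order.

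\textbf{Part (b).} I need a real polynomial $p$ with $p(\mu)=\mu^{1/k}$ at each distinct diagonal dual value $\mu=\lambda_i+\lambda_{i,j}\epsilon$. For a dual argument, $p(a+b\epsilon)=p(a)+p'(a)b\epsilon$. So it suffices to prescribe $p(\lambda_i)=\lambda_i^{1/k}$ and $p'(\lambda_i)=\tfrac1k\lambda_i^{(1-k)/k}$ at the distinct positive $\lambda_i$; Hermite interpolation gives a real polynomial doing this. Then $p(A)=Up(\Sigma)U^*=B$. If $CA=AC$, then $CA^m=A^mC$ for every $m$, so $CB=Cp(A)=p(A)C=BC$.

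\textbf{Uniqueness.} This is the main step. Let $C$ be another Hermitian positive semidefinite matrix with $C^k=A$. Then $C$ commutes with $C^k=A$, so by (b) it commutes with $B=p(A)$. The family $\{B,C\}$ therefore consists of commuting dual Hermitian matrices, and the first theorem of this section gives a single unitary $V$ with $VBV^*=D_1$ and $VCV^*=D_2$ both diagonal. Then $D_1^k=D_2^k$ entrywise, and the diagonal entries of both are nonnegative dual numbers. It remains to show that $x\mapsto x^k$ is injective on nonnegative dual numbers with positive standard part. The standard part gives $x_s=y_s$ by real injectivity of $t\mapsto t^k$ on $t\ge 0$. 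The dual part then gives $kx_s^{k-1}x_d=kx_s^{k-1}y_d$, and since $x_s>0$ we get $x_d=y_d$. Hence $D_1=D_2$ and $B=C$.

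Where $x_s=0$ this injectivity step is exactly what breaks, which is why the appreciability hypothesis is essential. The other delicate point is that the simultaneous-diagonalization theorem is only stated, not proved, in the paper; I would assume it as given.
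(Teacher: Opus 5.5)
Your counterexamples are correct, and they show the statement is false as written. The $1\times 1$ matrix $A=\epsilon$ is positive semidefinite but has no dual $k$-th root. The matrix $A=O$ has the distinct positive semidefinite roots $O$ and $c\epsilon I$ for every $c\ge 0$. The paper's proof breaks exactly where these predict. It first asserts that ``the unique nonnegative $k$th root is taken in each case'', and later invokes the ``uniqueness of the nonnegative $k$th root of a nonnegative number''. For dual numbers with zero standard part, such roots need not exist and need not be unique.

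Under your extra hypothesis that $A_s$ is nonsingular, your existence argument and part (b) follow the paper's construction $B=U\Sigma^{1/k}U^*$, $B=p(A)$. Your Hermite interpolation is a correct and cleaner replacement for the paper's vague interpolation at dual nodes that share a standard part: you prescribe $p(\lambda_i)$ and $p'(\lambda_i)$ and use $p(a+b\epsilon)=p(a)+p'(a)b\epsilon$.

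\textbf{The gap.} It sits in the uniqueness step, which you inherit from the paper. Theorem 3.1 is false for dual matrices, so assuming it as given is not safe, even in your restricted setting. Take
\[
B=I+\begin{bmatrix}1&0\\0&-1\end{bmatrix}\epsilon,\qquad C=I+\begin{bmatrix}0&1\\1&0\end{bmatrix}\epsilon .
\]
Both are Hermitian positive definite with the same standard part. They commute, because the product of the dual parts is killed by $\epsilon^2=0$. For any dual unitary $V=V_s+V_d\epsilon$ one has $VBV^*=I+V_s\operatorname{diag}(1,-1)V_s^*\epsilon$ and $VCV^*=I+V_s\begin{bmatrix}0&1\\1&0\end{bmatrix}V_s^*\epsilon$. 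Diagonalizing both would require the complex unitary $V_s$ to diagonalize two non-commuting Hermitian matrices, which is impossible. So commutativity of $B$ and $C$ alone gives you nothing.

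\textbf{The fix.} The conclusion is still true, and you can prove it directly without Theorem 3.1. Taking $x$ with zero dual part shows that $C_s$ is positive semidefinite. Since $C_s^k=A_s$, the classical result gives $C_s=A_s^{1/k}=B_s$. Comparing dual parts of $C^k=A=B^k$ then gives
\[
\sum_{j=0}^{k-1}B_s^{j}\,(C_d-B_d)\,B_s^{k-1-j}=O .
\]
Write $B_s=Q\operatorname{diag}(\beta_1,\dots,\beta_n)Q^*$ with all $\beta_p>0$. Then the $(p,q)$ entry of $Q^*(C_d-B_d)Q$ is multiplied by $\sum_{j=0}^{k-1}\beta_p^{j}\beta_q^{k-1-j}>0$, hence $C_d=B_d$. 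This is also exactly where nonsingularity of $A_s$ is needed. If $\beta_p=\beta_q=0$ the multiplier vanishes, which matches your $c\epsilon I$ counterexample.
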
  
\begin{proof}
By unitary decomposition of dual matrices represent $A=U \Sigma U^*$ in which $U=\left[U_1   U_2\right]$ is unitary,\\
where 
$ 
U_1 \in D^{n \times r} and  \quad \Sigma=\operatorname{diag}\left(\sigma_1, \sigma_2, \ldots \sigma_{k_1}, \sigma_{k_1+1} \ldots \sigma_{k_2}, \sigma_{k_2+1}, \ldots \sigma_{k_3}\right. 
\left.\cdots \sigma_{k_{t-1}+1}, \sigma_{k_{t-1}+2} \cdots \sigma_r, \ldots, 0\right)
$
where 
$$
\begin{gathered}
\sigma_1=\lambda_1+\lambda_{1,1} \epsilon\\
\sigma_2=\lambda_1+\lambda_{1,2} \epsilon \\
\vdots\\
\sigma_{k_1}=\lambda_1+\lambda_{1, k_1 \epsilon} \\
\vdots \\
\sigma_{k_2}=\lambda_2+\lambda_{2, k_2 \epsilon} \\
\vdots \\
\sigma_r=\sigma_{k_t}=\lambda_t+\lambda_{t, k_t \epsilon} .
\end{gathered}
$$
the diagonal entries of $\Sigma$ being $n$ eigenvalus of $A$ and $\lambda_1, \ldots \lambda_s$ are positive\\
Define $B=U \Sigma^{1 / k} U^*$ in which
$
\Sigma^{1 / k}=\operatorname{diag}\left(\left(\lambda_1+\lambda_1, \epsilon\right)^{1 / k}, \cdots\left(\lambda_1+\lambda_{1, k_1 }\epsilon\right)^{1 / k},\left(\lambda_2+\lambda_2, \epsilon\right)^{1 / k} \cdots \cdot \cdot\right. \\
\left.\left(\lambda_s+\lambda_{t, k_t} \epsilon\right)^{1 / k}, \cdots \cdot 0\right)
$ 
and the unique nonnegative $k^th$ root is taken in each case.\\
Consider 
$$
\begin{aligned}
B^k & =\left(U \Sigma^{V / k} U^*\right)\left(U \Sigma^{1 / k} U^*\right) \cdots\left(U \Sigma^{1 / k} U^*\right) \\
& =\left(U \Sigma^{1 / k} U^*\right)^k \\
& =U \Sigma U^* \\
& =A .
\end{aligned}
$$
We have
$$
\begin{aligned}
B & =U \Sigma^{1 / k} U^* \\
B^*& =\left(U \Sigma^{1 / k} U^*\right)^*\\
& =U \Sigma^{1 / k} U^*\\
& =B
\end{aligned}
$$
this implies $B$ is Hermitian and positive semidefinite since eigenvalues of $B$ is some as eigenvalues of $\Sigma^{1 / k}$. \\
Let p be a dual polynomial such that $$ p\left(\lambda_i+\lambda_{i, j} \epsilon\right)=\left(\lambda_i+\lambda_{i, j} \epsilon\right)^{1 / k},$$
$i=1, \ldots t$ and $p(0)=0$ if $t<n$ , which shows that p has real dual coefficients. Then $p(\Sigma)=a_0+a_1 \Sigma+a_2 \Sigma^2+\cdots+a_t \Sigma^t+\cdots+a_n \Sigma^n$ , 
Where \( a_0, a_1, \ldots a_n, \Sigma \) are dual
$p\left(\Sigma_s+\Sigma_d \epsilon\right)=a_0 I+a_1 \left[\begin{array}{cccc}\left(\lambda_1+  \lambda_{1,_1 }\epsilon\right) & O & \cdots & O \\ O & \left(\lambda_2+ \lambda_{2, k_2}  \epsilon\right) & \ddots & \vdots \\ \vdots & \ddots & \ddots & O \\ O & \cdots & O & \left(\lambda_t+\lambda_{t, k_t}  \epsilon\right) \end{array}\right] +\cdots+\\a_n\left[\begin{array}{cccc}\left(\lambda_1+  \lambda_{1,_1 }\epsilon\right) & O & \cdots & O \\ O & \left(\lambda_2+ \lambda_{2, k_2}  \epsilon\right) & \ddots & \vdots \\ \vdots & \ddots & \ddots & O \\ O & \cdots & O & \left(\lambda_t+\lambda_{t, k_t}  \epsilon\right) \end{array}\right]^n $

  $=\left[\begin{array}{cccc}p\left(\lambda_1+  \lambda_{1,_1 }\epsilon\right) & O & \cdots & O \\ O & p\left(\lambda_2+ \lambda_{2, k_2}  \epsilon\right) & \ddots & \vdots \\ \vdots & \ddots & \ddots & O \\ O & \cdots & O & p\left(\lambda_t+\lambda_{t, k_t}  \epsilon\right) \end{array}\right]$

$=\left[\begin{array}{cccc}\left(\lambda_1+  \lambda_{1,_1 }\epsilon\right)^{1 / k} & O & \cdots & O \\ O & \left(\lambda_2+ \lambda_{2, k_2}  \epsilon\right)^{1 / k} & \ddots & \vdots \\ \vdots & \ddots & \ddots & O \\ O & \cdots & O & \left(\lambda_t+\lambda_{t, k_t}  \epsilon\right)^{1 / k} \end{array}\right]$

$=\Sigma^{1 / k}$  \\ and \\
 $\begin{aligned} p(A) & =p\left(U \Sigma U^*\right) \\ & =a_0+a_1\left(U \Sigma U^*\right)+\cdots+a_t\left(U \Sigma U^*\right)^t+\cdots+a_n\left(U \Sigma U^*\right)^n \\ & =U\left[a_0 I+a_1 \Sigma+\cdots+a_t \Sigma^t+\cdots+a_n \Sigma^n\right] U^* \\ & =U P(\Sigma) U^* \\ & =U \Sigma^{1 / K} U^* \\ & =B .\end{aligned}$ , \\which verifies (b). Now show that B commutes with any matrix that commute with A and it is unique Hermitian positive semidefinite matrix. If $C$ is a positive semidefinite dual Hermitian matrix such that  $ C^k$ = $A$ , then $B=p(A)=p(C^k)$ and hence $B$ commutes with $C$. Theorem 3.1 ensures that there is a dual unitary $U$ that simultaneously diagonalizes $B$ and $C$ ,  so $B=U\Sigma_1 U^*$ and $C=U \Sigma_2 U^*$, in which $\Sigma_1, \Sigma_2 \in D_n$ are nonnegative diagonal. 
 Since $B^k=A$ and $C^k=A$   
i.e $B^k=A=C^k$  \\
$\left(U \Sigma_1 U^*\right)^k=\left(U \Sigma_2 U^*\right)^k $\\
 $U \Sigma_1^k U^*=U \Sigma_2^k U^* $, \\  we deduce that 
$ \Sigma_1^k=\Sigma_2^k. $\\
 Uniqueness of the nonnegative $k$ th root of a nonnegative number implies that\\
$\Sigma_1=\left(\Sigma_1^k\right)^{1 / k}=\left(\Sigma_2^k\right)^{1 / k}=\Sigma_2$ ,
 so $B=C$. \\
\end{proof}
We now state the Polar decomposition theorem for dual complex matrices.
\begin{thm}   
Let $A \in \mathbb{D}\mathbb{C}^{n \times m}$
\begin{enumerate}
\item If $n<m$, then $A=P U$, in which $P \in  \mathbb {D} \mathbb {C}^{n\times n}$ is a dual positive semidefinite and $U \in \mathbb {D} \mathbb {C}^{n\times m}$ has dual orthonormal rows. The factor $P=\left(A A^*\right)^{1 / 2}$ is uniquely determined; it is a dual polynomial in $A A^*$. The factor $U$ is uniquely determined if $\operatorname{rank} A=n$.
\item If $n=m$, then $A=P U=U Q$, in which $P, Q \in \mathbb {D} \mathbb {C}^{n \times n}$ are positive semidefinite and $U \in \mathbb {D}\mathbb {C}^{n \times n}$ is unitary. The factors $P=\left(A A^*\right)^{1 / 2}$ and $Q=\left(A^* A\right)^{1 / 2}$ are uniquely determined; $P$ is a polynomial in $A A^*$ and $Q$ is a polynomial in $A^* A$. The factor $U$ is uniquely determined if $A$ is nonsingular.
\item If $n>m$, then $A=U Q$, in which $Q \in  \mathbb {D}\mathbb {C}^{m \times m}$ is positive semidefinite and $U \in \mathbb {D}\in \mathbb {C}^{n\times m}$ has orthonormal columns. The factor $Q=\left(A^* A\right)^{1 / 2}$ is uniquely determined; it is a polynomial in $A^* A$. The factor $U$ is uniquely determined if rank $A=m$.
\end{enumerate}
\end{thm}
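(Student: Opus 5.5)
The plan is to build the factorization from the dual SVD (Theorem \ref{Theorem 2}) and to get the uniqueness of the positive factors from Theorem \ref{Theorem 7.2.6}. Since $AA^*$ and $A^*A$ are Hermitian, the only extra fact I need is that they are positive semidefinite: for every $\mathbf{x}\in\mathbb{DC}^n$,
$$\mathbf{x}^*AA^*\mathbf{x}=(A^*\mathbf{x})^*(A^*\mathbf{x})=\sum_i |y_i|^2 ,$$
and each $|y_i|^2=\bar y_i y_i$ is a nonnegative dual number under the total order of \cite{qi2022dual}. Theorem \ref{Theorem 7.2.6} then gives unique positive semidefinite square roots $P=(AA^*)^{1/2}$ and $Q=(A^*A)^{1/2}$. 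Each is a dual polynomial in $AA^*$, respectively $A^*A$.

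For existence in the case $n<m$, I write $A=V\Sigma W^*$ as in Theorem \ref{Theorem 2}. Here $\Sigma=[\Sigma_n\ \ O]$ with $\Sigma_n\in\mathbb{D}^{n\times n}$ diagonal and nonnegative. Split $W=[W_1\ W_2]$ with $W_1\in\mathbb{DC}^{m\times n}$, and set
$$P':=V\Sigma_nV^*,\qquad U:=VW_1^*.$$
Then $P'U=V\Sigma_nW_1^*=A$, and $UU^*=VW_1^*W_1V^*=I_n$ because the columns of $W$ are orthonormal. Moreover $P'$ is Hermitian positive semidefinite with $P'^2=V\Sigma_n^2V^*=V\Sigma\Sigma^*V^*=AA^*$. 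By part (a) of Theorem \ref{Theorem 7.2.6}, $P'=P$, so $P$ is uniquely determined.

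The case $n>m$ is the adjoint of the previous case: apply it to $A^*$ and take conjugate transposes. For $n=m$, the same construction gives $A=(V\Sigma V^*)(VW^*)=(VW^*)(W\Sigma W^*)$. Thus $A=PU=UQ$ with the same unitary $U=VW^*$, where $W\Sigma W^*$ squares to $A^*A$ and therefore equals $Q$.

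Uniqueness of $U$ is the main obstacle, because a dual matrix can have full standard rank or only full dual rank, and $P$ may fail to be invertible. My hypothesis is that ``$\operatorname{rank}A=n$'' (respectively nonsingular) means that $A_s$ has full row rank, i.e.\ $\operatorname{ARank}(A)=n$. Then all $\mu_i$ are appreciable, so $\Sigma_n$ is invertible as a dual matrix with inverse $\Sigma_{n,s}^{-1}-\Sigma_{n,s}^{-1}\Sigma_{n,d}\Sigma_{n,s}^{-1}\epsilon$. Hence $P$ is invertible, and from $PU=A=PU'$ I get $U=P^{-1}A=U'$. The same argument handles $Q$ when $n\ge m$.

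If the intended meaning of rank were the dual rank $t$ of Definition~2.1, uniqueness would genuinely fail. When infinitesimal singular values are present, $P$ has a nontrivial dual kernel. For example, $A=\epsilon$ with $n=m=1$ gives $P=\epsilon$, and $A=\epsilon u$ holds for every unit $u$ whose standard part is $1$. So in writing the proof I would state explicitly that nonsingularity means invertibility of $A$, which is equivalent to invertibility of $A_s$. Checking that this equivalence holds is the key point to verify.
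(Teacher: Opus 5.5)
Your existence argument is the paper's: dual SVD, $P'=V\Sigma_nV^*$ and $U=VW_1^*$, adjoints for $n>m$, and $U=VW^*$ for $n=m$. Your treatment of the rank hypothesis improves on the paper. By the paper's own definition $\epsilon$ is positive definite, yet it is not invertible. So the paper's step ``$P$ is positive definite, hence $U=P^{-1}A$'' fails on infinitesimal singular values, exactly as your example $A=\epsilon$ shows.

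The gap is in the step you inherit from the paper: ``$P'=P$ by Theorem~\ref{Theorem 7.2.6}(a)'' together with ``each is a dual polynomial in $AA^*$''. Over dual numbers the positive semidefinite square root is not unique once there is a zero eigenvalue, since $(b\epsilon)^2=0$ for every real $b\ge 0$. Infinitesimal singular values square to $0$. Hence whenever $\operatorname{ARank}(A)<n$, every matrix $V\,\mathrm{diag}(\mu_1,\dots,\mu_r,c_{r+1}\epsilon,\dots,c_n\epsilon)V^*$ with real $c_i\ge 0$ is a positive semidefinite square root of $AA^*$. So $P'^2=AA^*$ singles out nothing. For $A=\epsilon$, $AA^*=0$, and the polar factor $\epsilon$ is one root among all $b\epsilon$. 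The polynomial claim is actually false. For $A=\mathrm{diag}(\epsilon,0)$ one has $AA^*=O$, so every dual polynomial in $AA^*$ is a scalar matrix, whereas your $P'$ is $\mathrm{diag}(\epsilon,0)$.

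Uniqueness of $P$ is still true, but you have to extract it from $A=PU$, not from $P^2=AA^*$ alone. The standard part is the classical root $P_s=(A_sA_s^*)^{1/2}$. In an eigenbasis of $P_s$, the relation $P_sP_d+P_dP_s=A_sA_d^*+A_dA_s^*$ determines every entry of $P_d$ except the block $NP_dN$. Here $N$ is the orthogonal projector onto $\ker P_s=\ker A_s^*$. For that block, multiply $A_d=P_sU_d+P_dU_s$ on the left by $N$ and use $(I-N)U_s=P_s^{\dagger}A_s$:
\[
G:=NA_d-NP_d(I-N)P_s^{\dagger}A_s=(NP_dN)(NU_s).
\]
The left side $G$ is already determined by $A$, and $NU_sU_s^*N=N$, so $GG^*=(NP_dN)^2$. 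Moreover $NP_dN$ is positive semidefinite: for $\mathbf{x}$ with $\mathbf{x}_s\in\ker P_s$ one gets $\mathbf{x}^*P\mathbf{x}=(\mathbf{x}_s^*P_d\mathbf{x}_s)\epsilon$. Hence $NP_dN$ is the classical square root of $GG^*$, and $P$ is unique.

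When $\operatorname{ARank}(A)=n$ one has $N=O$, the square root of $AA^*$ is unique, and your uniqueness argument for $U$ goes through as written. Outside that case you must drop the assertion that $P$ is a polynomial in $AA^*$; the same applies to $Q$ and $A^*A$.
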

\begin{proof}
By Theorem \ref{Theorem 2} we adopt the notation of singular value decomposition for dual matrices which ensures  that there are dual unitary matrices $V \in D_{n \times n}$ and $ W \in D_{m \times m}$  and a dual non negative diagonal matrix  $\Sigma \in D_{n \times m}$ with a special structure such that $A=V \Sigma W^*$.
Let $t =\min \{n, m\}$ and let $\Sigma_t \in D_{t \times t}$ be the diagonal matrix taking the form
$$
\Sigma_t=\operatorname{diag}\left(\mu_1 \cdots \mu_r \cdots \mu_t\right)
$$
$r \leq t \leq \min \{m, n\}, \quad \mu_1 \geq \mu_2 \geq \ldots \geq \mu_r$ are  positive appreciable dual numbers and $\mu_{r+1} \geq \ldots \geq \mu_t$ are positive infinitesimal dual numbers.\\
\noindent
\text{\bf $(1)$} Let $W=\left[\begin{array}{ll}W_1 & W_2\end{array}\right]$; in which $W_1 \in D_{m, n}$. Then \\
   $\begin{aligned} 
 A & =V\Sigma W^* \\[-1cm] 
 & \left.=V[\Sigma_n \ 0\right]\left[\begin{array}{l}W_1^* \\ W_2^*\end{array}\right] \\[-1cm] 
 & =V \Sigma_n W_1^* \\ 
 & =\left(V \Sigma_n V^*\right)\left(V W_1^*\right)
 \end{aligned}$\\
in which $P=V \Sigma_n V^*$ is positive semidefinite since P is unitarily similar to $ \Sigma_n.$ The eigenvalues of $ \Sigma_n $are same as eigenvalues of P and $U=V W_1^*$ has orthonormal rows.\\
Since $P^2=V \Sigma_n \Sigma_n V^*=V \Sigma \Sigma^T V^*= \left(V \Sigma W^*\right)\left(W \Sigma^T V^*\right)=A A^*, $ \\
By Theorem \ref {Theorem 7.2.6}, P is uniquely determined as the (polynomial) positive semidefinite square root of $A A^*$  .\\
If rank $A=n$, i.e it has n nonzero eigenvalues then $\Sigma_n$ and $P$ are positive definite, so $U=P^{-1} A$ is uniquely determined.\\
\text{\bf $(2)$} Let $\Sigma=\Sigma_n$. We have\\
 $\begin{aligned}
A & =V \Sigma W^* \\
& = V\Sigma V^*V W^* \\
& =\left(V \Sigma V^*\right)\left(V W^*\right) \\
& = PU  \\ and 
\end{aligned}$ \\ 
$\begin{aligned}
A & =V \Sigma W^* \\
& = V W^* W \Sigma W^* \\
& =\left(V W^*\right)\left(W \Sigma W^*\right) \\
& = UQ
\end{aligned}$ \\
So if we let $ P=V \Sigma V^* , Q= W\Sigma W^* and  U = V W^* $ then we have factorizations of the required form.\\
$\begin{aligned} P^2 & =\left(V \Sigma V^*\right)\left(V \Sigma V^*\right) \\ & =V \Sigma \Sigma^{\top} V^* \\ & =\left(V \Sigma W^*\right)\left(W \Sigma^{\top} V^*\right) \\ & =A A^*\end{aligned}$ \\ and \\
$\begin{aligned} Q^2 & =\left(W \Sigma W^*\right)\left(W \Sigma W^*\right) \\ & =W \Sigma^{\top} \Sigma W^* \\ & =\left(W \Sigma^{\top} V^*\right)\left(V \Sigma W^*\right) \\ & =A^* A .\end{aligned}$\\
By Theorem \ref {Theorem 7.2.6}, P and Q are uniquely determined as the respective  (polynomial) positive semidefinite square root of $A A^*$ and $ A A^* $  .\\
If $A$  is nonsingular i.e $A^{-1} $ exist . We have $A =PU=UQ $ where P and Q are positive definite since $A$ is nonsingular then $ U=P^{-1} A = A Q^{-1} $ is uniquely determined.\\
\text{\bf $(3)$} Let $V=\left[\begin{array}{ll}V_1 & V_2\end{array}\right]$; in which $V_1 \in D_{n, m}$. Then \\ 
$\begin{aligned} 
A & =V \Sigma W^* \\[-0.5cm]
&=\left[V_1  V_2\right]\left[\begin{array}{c}\Sigma_m \\ 0\end{array}\right] W^* \\[-0.5cm]
&=V_1 \Sigma_m W^* \\
& =\left(V_1 W^*\right)\left(W \Sigma_m W^*\right) \\ 
& =U Q
\end{aligned}$\\
in which $Q=W \Sigma_m W^*$ is positive semidefinite since Q is unitarily similar to $ \Sigma_m.$ The eigenvalues of $ \Sigma_m $are same as eigenvalues of Q  and $U=V_1 W^*$ has orthonormal rows.\\
Since $Q^2=W\Sigma_m \Sigma_m W^*=W \Sigma_m \Sigma_m^T W^*= \left(W \Sigma_m V^*\right)\left(V \Sigma_m^T W^*\right)= A^*A, $ \\
By Theorem \ref {Theorem 7.2.6}, Q is uniquely determined as the (polynomial) positive semidefinite square root of $ A^*A$  .\\
If rank $A=m$, i.e it has m nonzero eigenvalues then $\Sigma_m$ and $Q$ are positive definite, so $U=Q^{-1} A$ is uniquely determined.
\end{proof} \noindent
\textbf{NOTE} :  From the above theorem , we call factorization $(a)$ left dual polar decomposition and $(c)$ right dual polar decomposition.\\
We illustrate the above theorem with the following example.
 \begin{ex}
  Let $B=\begin{bmatrix}
            1& \epsilon &1\\
            \epsilon&\epsilon&\epsilon
        \end{bmatrix}\in \mathbb{D}^{2 \times 3}$. Then, 
           the unitary decomposition of $A=B^*B=\begin{bmatrix}
            1 & \epsilon & 1\\
            \epsilon & 0 & \epsilon\\
            1 & \epsilon & 1 
        \end{bmatrix}$, Applying unitary decomposition on $A_{s}$, we have $$S^*A_{s}S=\begin{bmatrix}
        2&0& 0\\
        0&0&0\\
        0&0&0 
    \end{bmatrix}, \text{where}~S=\begin{bmatrix}
        \frac{1}{\sqrt{2}} & 0 & \frac{1}{\sqrt{2}}\\
         0 & 1 & 0 \\
         \frac{1}{\sqrt{2}} & 0 & -\frac{1}{\sqrt{2}}
    \end{bmatrix}.$$
 Now, $$M=S^*A_{s}S+S^*A_{d}S\epsilon=\begin{bmatrix}
        2 & \sqrt{2} \epsilon & 0 \\
\sqrt{2} \epsilon & 0 & 0 \\
0 & 0 & 0
    \end{bmatrix}.$$   
From $M$, we obtain $\hat P=\begin{bmatrix}
        1 & \frac{1}{\sqrt{2}} \epsilon & 0 \\
-\frac{1}{\sqrt{2}} \epsilon & 1 & 0 \\
0 & 0 & 1
    \end{bmatrix}$. Thus, the unitary decomposition of $A$ is $$\hat W^*A \hat W=\begin{bmatrix}
        2&0& 0\\
        0&0&0\\
        0&0&0 
    \end{bmatrix}, \text{ where }\hat W =(\hat PS)^*=\begin{bmatrix}
       \frac{1}{\sqrt{2}} & -\frac{\epsilon}{2} & \frac{1}{\sqrt{2}} \\ \frac{\epsilon}{\sqrt{2}} & 1 & 0 \\ \frac{1}{\sqrt{2}} & -\frac{\epsilon}{2} & -\frac{1}{\sqrt{2}}
    \end{bmatrix}.$$


  Now, $\hat{V}_{1}=B\hat W_{1}\Sigma_{1}^{-1} =\begin{bmatrix}
            1 \\
            {\epsilon}
        \end{bmatrix}$, where $\hat W_{1}=\begin{bmatrix}
            \frac{1}{\sqrt{2}}\\
            -\frac{\epsilon}{\sqrt{2}}\\
            \frac{1}{\sqrt{2}}  
        \end{bmatrix}$ and $\Sigma_{1}=\begin{bmatrix}
            \sqrt{2}
        \end{bmatrix}$. Taking $ \hat V _{2}=\begin{bmatrix}
           {-\epsilon}\\
           1
        \end{bmatrix}$ such that $\hat V =\begin{bmatrix}
            1 & {-\epsilon}\\
{\epsilon} & 1
        \end{bmatrix}$ is unitary.Then, $$ \hat V^*B \hat W= \begin{bmatrix} \sqrt {2} & 0&0 \\ 0 &  \epsilon & 0 \end{bmatrix} 
        =\begin{bmatrix}
            \Sigma_{1}&0\\
            0&G\epsilon
        \end{bmatrix},~ \text{where}~ G=\begin{bmatrix}
            1 & 0
        \end{bmatrix}.$$ 
         By singular value decomposition of $G$, we obtain $X^*GY=\begin{bmatrix}
            1 & 0
        \end{bmatrix}$, where $X=\begin{bmatrix}
            1
        \end{bmatrix}$ and $Y=\begin{bmatrix}
            1 & 0\\
            0& 1
        \end{bmatrix}$.
  Thus, the singular value decomposition of $B$ is $$V^*BW=\begin{bmatrix}
        \sqrt{2} &0 & 0\\
        0 & {\epsilon} & 0
    \end{bmatrix},$$ where $$V=\hat V \begin{bmatrix}
        1&0\\
        0&X
    \end{bmatrix}=\begin{bmatrix}
        1 & {-\epsilon}\\
       {\epsilon}&1
    \end{bmatrix}$$ and $$W =\hat W\begin{bmatrix}
        1 & 0\\
        0 & Y
    \end{bmatrix}=\begin{bmatrix}
       \frac{1}{\sqrt{2}}&\frac{-\epsilon}{2} &\frac{1}{\sqrt{2}}\\
            \frac{\epsilon}{\sqrt{2}} & 1 & 0\\
            \frac{1}{\sqrt{2}} & \frac{-\epsilon}{2} & \frac{-1}{\sqrt{2}}
    \end{bmatrix}.$$
    Now we have to find the polar decomposition with the help of SVD.\\
    $ B= PU $ ,  \text{where}~ $P = V\Sigma_{2\times 2} V^* $ =  $\begin{bmatrix}
        \sqrt{2} & \sqrt{2} \epsilon \\
         \sqrt{2} \epsilon & \epsilon
    \end{bmatrix}$ and \\ $ U = V W_1^* $ = $\begin{bmatrix}
    \frac{1}{\sqrt{2}} & \frac{\epsilon}{\sqrt{2}}-\epsilon & \frac{1}{\sqrt{2}} \\ 
    \frac{\epsilon}{\sqrt{2}}-\frac{\epsilon}{2} & 1 & \frac{\epsilon}{\sqrt{2}}-\frac{\epsilon}{2}
    \end{bmatrix}$ \\
    Substitute P and U values
    $$ 
    PU  = \begin{bmatrix}
        \sqrt{2} & \sqrt{2} \epsilon \\
         \sqrt{2} \epsilon & \epsilon
         \end{bmatrix} 
    \begin{bmatrix}
    \frac{1}{\sqrt{2}} & \frac{\epsilon}{\sqrt{2}}-\epsilon & \frac{1}{\sqrt{2}} \\ 
    \frac{\epsilon}{\sqrt{2}}-\frac{\epsilon}{2} & 1 & \frac{\epsilon}{\sqrt{2}}-\frac{\epsilon}{2}
    \end{bmatrix} = \begin{bmatrix} 
     1 & \epsilon & 1 \\ \epsilon &\epsilon & \epsilon
     \end{bmatrix} = B 
    $$\\
 \end {ex}

\section {Symmetric unitary polar decomposition}
\begin{thm}
Let $A \in \mathbb{D}\mathbb{C}^{n \times n}$.\vspace{0.2cm}There exist a unitary matrix $U\in \mathbb{D}\mathbb{C}^{n \times n}$ and a complex symmetric $S\in \mathbb{D}\mathbb{C}^{n \times n}$ \vspace{0.2cm}such that $A=SU$. Furthermore,both left and right DSUPD exist for every matrix A and is always non unique.
\end{thm}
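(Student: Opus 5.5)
Start from the dual singular value decomposition of Theorem \ref{Theorem 2} in the square case: $A=V\Sigma W^*$ with $V,W\in\mathbb{D}\mathbb{C}^{n\times n}$ unitary and $\Sigma\in\mathbb{D}^{n\times n}$ a real (dual) nonnegative diagonal matrix. The whole argument rests on one observation: a real diagonal matrix is symmetric, so conjugating $\Sigma$ by a matrix $X$ and its \emph{transpose} preserves symmetry. Concretely, we write
$$A=V\Sigma W^*=(V\Sigma V^{T})(\bar V W^*),$$
using $V^{T}\bar V=\overline{V^*V}=I$. This identity holds in the dual setting because $V^*V=I$ implies $\overline{V}^{\,*}\overline{V}$... more precisely $(V^T\bar V)=\overline{V^*V}$ entrywise, since conjugation of dual complex numbers is applied to the standard and dual parts separately and commutes with products. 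We then set $S=V\Sigma V^{T}$ and $U=\bar V W^*$. From $\Sigma^T=\Sigma$ we get $S^{T}=V\Sigma^{T}V^{T}=S$, so $S$ is complex symmetric. To see that $U$ is unitary, note that $\bar V$ is unitary because $\bar V^*\bar V=V^T\bar V=\overline{V^*V}=I$ (and similarly on the other side), and a product of unitaries is unitary. This gives the right DSUPD $A=SU$. The only thing we need to check in the dual arithmetic is that transpose and conjugation interact with products as in the complex case: $(XY)^T=Y^TX^T$ and $\overline{XY}=\bar X\bar Y$. Both follow by expanding standard and dual parts, and this is the step we would verify explicitly.

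For the left DSUPD we symmetrically write
$$A=(V\overline{W}^{\,*}\!)\cdots$$
More cleanly, $A=(V W^{T})(\bar W\Sigma W^*)=VR$ with $R=\bar W\Sigma W^{*}$, which satisfies $R^T=\bar W\Sigma W^*$ (as $(W^*)^T=\bar W$), and $V\!:=VW^T$ is unitary. Alternatively, we can apply the right DSUPD to $A^T$ and transpose.

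For non-uniqueness, we exhibit a family. Given $A=SU$, take any real orthogonal $O\in\mathbb{R}^{n\times n}$, or any $\theta$ with a diagonal $D=\mathrm{diag}(e^{i\theta_j})$ commuting with $\Sigma$. Then $\Sigma=D\Sigma D$ fails in general, so instead we use $V\mapsto VD$ for a diagonal unitary $D$ with $D\Sigma D^*=\Sigma$. This still gives an SVD, and the resulting $S'=VD\Sigma D^TV^T=VD^2\Sigma V^T$ differs from $S$ whenever $D^2\neq I$ on an index where $\mu_j\neq0$. The simplest instance is $D=e^{i\theta}I$, which gives $S'=e^{2i\theta}S$ and $U'=e^{-2i\theta}U$. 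This already yields infinitely many pairs when $A\neq O$. When $A=O$, we take $S=O$ and $U$ arbitrary unitary. The main obstacle is not the construction but rigorously confirming that these pairs are genuinely distinct in the dual sense, which requires $S_s\neq0$ or $S_d\neq0$. This holds since $A\ne O$ forces $S\ne O$, and $e^{2i\theta}S=S$ then fails for $e^{2i\theta}\neq1$.
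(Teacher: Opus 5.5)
Your proposal is correct and follows the paper's proof. Both take the dual SVD, factor $A=(V\Sigma V^{T})(\bar V W^*)$, and get non-uniqueness by replacing $V$ with $VD$. Your concrete instance $D=e^{i\theta}I$ (with $W$ rescaled the same way, giving $S'=e^{2i\theta}S$ and $U'=e^{-2i\theta}U$), your distinctness check, and your separate treatment of $A=O$ are more careful than the paper's assertion that $S_1\neq S$ unless $D^2=I$, which fails when $\Sigma$ has zero diagonal entries.
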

\begin{proof}
We begin by considering the SVD of $A$
$$
\begin{aligned}
A & =V \Sigma W^* \\
& =V \Sigma V^{\top} \bar{V} W^* \\
& =\left(V \Sigma V^{\top}\right)\left(\bar{V} W^*\right) \\
& =S U
\end{aligned}
$$
and observe that
$$
S=V \Sigma V^{\mathrm{T}}
$$
is a complex symmetric matrix, while
$$
U=\bar{V} W^*
$$
is unitary.This consideration establishes the existence of a \vspace{0.2cm}
left DSUPD for every matrix A.\vspace{0.2cm} The existence of a right DSUPD can be shown similarly.
Now from DSUPD we have 
$S=V \Sigma V^{\top}$. Replace $V$ by $V_1=V D$ where $D$ is diagonal matrix
we get 
$$
\begin{aligned}
S_1 &=V_1 \Sigma V_1^{\top}\\
& =(U D) \Sigma (V D)^{\top} \\
& =V D \Sigma D^{\top} U^{\top} \\
& =V \Sigma D^2 V^{\top}\\
& =V \Sigma D^2 V^{\top}
\end{aligned}
$$
 which is different from $S$
unless $D^2=I$. We conclude that there are infinitely  many left DSUPDs.The same is true of right DSUPs.
\end{proof}\noindent
The following example illustrates Theorem 3.3 and 4.1
\begin{ex}   
Consider a dual matrix $A=A_{s}+A_{d}\epsilon\in \mathbb{D}^{2 \times 2}$, where 
    $$A_{s}=\begin{bmatrix}
        1&0\\
        0&0
    \end{bmatrix} \text{and}~~A_{d}=\begin{bmatrix}
        0&1\\
        1&1
    \end{bmatrix}.$$ Then, $$A=V\Sigma W^T=\begin{bmatrix}
        1&-\epsilon\\
        \epsilon&1
    \end{bmatrix}\begin{bmatrix}
        1&0\\
        0&\epsilon
    \end{bmatrix}\begin{bmatrix}
        1&-\epsilon\\
        \epsilon&1
    \end{bmatrix}.$$ 
    
\textbf{Dual polar decomposition}
  $$P=V\Sigma V^*=\begin{bmatrix}
       1&-\epsilon\\
        \epsilon&1
    \end{bmatrix}\begin{bmatrix}
        1&0\\
        0&\epsilon     
    \end{bmatrix}\begin{bmatrix}
         1&\epsilon\\
        -\epsilon&1
    \end{bmatrix} $$
    $$U=\bar{V} W^*=\begin{bmatrix}
       1&-\epsilon\\
        \epsilon&1
    \end{bmatrix}\begin{bmatrix}

         1&\epsilon\\
        -\epsilon&1
    \end{bmatrix}$$

Hence $PU=A$

 \textbf{Dual symmetric unitary polar decomposition}
  $$S=V\Sigma V^T=\begin{bmatrix}
       1&-\epsilon\\
        \epsilon&1
    \end{bmatrix}\begin{bmatrix}
        1&0\\
        0&\epsilon     
    \end{bmatrix}\begin{bmatrix}
         1&\epsilon\\
        -\epsilon&1
    \end{bmatrix} $$
    $$U=\bar{V} W^*=\begin{bmatrix}
       1&-\epsilon\\
        \epsilon&1
    \end{bmatrix}\begin{bmatrix}

         1&\epsilon\\
        -\epsilon&1
    \end{bmatrix}$$

Hence $SU=A$
\end{ex} \noindent
\textbf{Observation} : 
From the above example, we observe that over the field of real numbers, both the dual polar decomposition and the DSUPD yield the same result
\bibliography{references}
\bibliographystyle{elsarticle-num} 
\end{document}